\definecolor{iblue}{rgb}{0.1,0,0.75}
\definecolor{ired}{rgb}{0.9,0,0.1}
\def\N{\mathbb{N}}
\def\P{\mathbb{P}}
\def\X{\mathbb{X}}
\def\d{\mathrm{d}}
\renewcommand{\l}{\left}
\renewcommand{\r}{\right}
\newcommand{\cdist}{\stackrel{d}{\rightarrow}}
\newtheorem{theorem}{Theorem}[section]
\newtheorem{proposition}[theorem]{Proposition}
\newtheorem{definition1}[theorem]{Definition} 
\newtheorem{remark1}[theorem]{Remark} 
\long\def\symbolfootnote[#1]#2{\begingroup\def\thefootnote{\hspace*{-1mm}\fnsymbol{footnote}}\footnote[#1]{#2}\endgroup}
\title{\bf \vspace{-2.5cm}
Clustering dynamics in a class of normalised generalised gamma dependent priors
\\[5mm]
}
\author{
\textsc{Matteo Ruggiero\footnote{Email: matteo.ruggiero@unito.it}}\\[1mm]
\emph{University of Torino and Collegio Carlo Alberto}\\[3mm]
\textsc{Matteo Sordello}\\[1mm]
\emph{University of Pennsylvania}
}
\date{\today}
\begin{document}
\thispagestyle{empty}

\maketitle

\begin{center}
\begin{minipage}{.75\textwidth}
\footnotesize\noindent
Normalised generalised gamma processes are random probability measures that induce nonparametric prior distributions widely used in Bayesian statistics, particularly for mixture modelling. 
We construct a class of dependent normalised generalised gamma priors induced by a stationary population model of Moran type, which exploits a generalised P\'olya urn scheme associated with the prior. 
We study the asymptotic scaling for the dynamics of the number of clusters in the sample, which in turn provides a dynamic measure of diversity in the underlying population. The limit is formalised to be a positive nonstationary diffusion process which falls outside well known families, with unbounded drift and an entrance boundary at the origin. We also introduce a new class of stationary positive diffusions, whose invariant measures are explicit and have power law tails, which approximate weakly the scaling limit.\\[-2mm]

\textbf{Keywords}:
alpha diversity, Bayesian nonparametrics, dependent process, diffusion process,  generalised P\'olya urn, Moran model, scaling limit.

\textbf{MSC} Primary:
60J60, 
60G10. 
Secondary:
92D25, 
\end{minipage}
\end{center}
\linespread{1.2}

\section{Introduction}
\label{intro}
A key aspect in Bayesian nonparametric inference, both from a methodological and a computational point of view, is the clustering of the observations. Regardless of whether these represent real quantities of interest or latent features used in intermediate levels of hierarchies, an often important inferential issue is the estimation of the number of components underlying the mixture. A Bayesian nonparametric mixture model typically takes the form
\begin{equation}\label{mixture}
f(y)=\int f(y\mid x)P(\d x)
\end{equation} 
where $f(\cdot| x)$ is a density function for every value of $x$, and the latent quantity $x$ is modelled through a random probability measure $P$. When $P$ is a Dirichlet process \citep{F73}, \eqref{mixture} is the Dirichlet process mixture model introduced by \cite{L84}, which is  to date the most popular Bayesian nonparametric approach.
The mixture \eqref{mixture} can be equivalently expressed in hierarchical form by writing
\begin{equation}\label{hierachical model}
\begin{split}
P\sim&\, \mathscr{P}\\
X_{i}\mid P\overset{iid}{\sim}&\, P\\
Y_{i}\mid X_{i}\overset{ind}{\sim}&\, f(\cdot\mid X_{i}),
\end{split}
\end{equation} 
where $\mathscr{P}$ is the prior induced by the random probability measure $P$ on the set of distributions over the latent space. If $\mathscr{P}$ selects almost surely discrete probability measures, as is the case when $P$ is a Dirichlet process \citep{B73}, then the latent variables $X_{i}$ will feature ties and can be used to cluster the observations $Y_{i}$ according to the kernel $f(\cdot| X_{i})$ from which they are generated. For this reason, the number of distinct values $K_{n}\le n$ in the exchangeable sequence $X_{1},\ldots,X_{n}$ is sometimes loosely referred to as the number of clusters. 

The implications on inference of the clustering structures induced by the Dirichlet process, which behave as logarithmic functions of the number of observations \citep{KH73}, have long been object of extensive investigation.
Recent advances in the field have drawn attention to different clustering behaviours, such as those induced by Pitman--Yor processes \citep{P95,PY97}, normalised inverse-gaussian processes \citep{LMP05} and normalised generalised gamma processes \citep{LMP07}. Despite the increased generality, these priors stand out for their tractability among the various generalisations of the Dirichlet process, and contrast with the latter by inducing clustering structures which behave as power functions of the number of observation. See \cite{LP10}.

Another recent, significant line of research in Bayesian nonparametrics aims at extending nonparametric priors in order to accommodate forms of dependence more general than exchangeability. 
 \cite{ME99,ME00} proposed a class of so-called  \textit{dependent processes} for modelling partially exchangeable sequences, where observations are exchangeable conditional on a given set of covariates, but not overall exchangeable. These are modelled through a collection of random probability measures with series representation of the form
\begin{equation}\label{series repr2}\nonumber
P_{z}=\sum_{i\ge1}p_{i,z}\delta_{x_{i,z}},
\end{equation} 
where the weights $p_{i,z}$ and/or the atoms $x_{i,z}$ depend on some covariate $z$, which can be multidimensional or possibly represents time. See \cite{HHMW} for a review and for recent developments. In particular, the discrete nature of these dependent priors and their wide applicability to concrete problems call for new understanding of how the clustering structure depends on the covariate, which is in turn induced by the type of dependence used for defining $p_{i,z}$ and $x_{i,z}$. 


%

In this paper we construct a new class of temporally dependent priors which are induced by a normalised generalised gamma population model, and investigate the scaling limit for the dynamics of the number of  groups or clusters. Other classes of dependent normalised random measures have been constructed recently in \cite{GKS13}, \cite{LNP14} and \cite{GL16}. 
{Here, by taking a different approach, the construction embeds normalised generalised gamma priors in a temporal environment. We define a dynamic population model of Moran type (see Section \ref{sec:model} for details), which can also be seen as the iteration of Gibbs sampling steps at the level of the latent variables in the hierarchy. We study the rescaling of the induced number of groups in the population and identify the limit to be a positive, nonstationary diffusion process which seems to fall outside well known classes. As the use of stationary components in dependent hierarchical model is often desirable, we formulate a weak approximation of the scaling limit by introducing a new family of stationary and positive diffusions whose invariant measure is explicit and exhibits a power law right tail.  }

{Scaling limits of the clustering dynamics for other classes of dependent models connected with Bayesian nonparametrics have been studied in  \cite{RWF13,R14} for the normalised inverse gaussian and the two-parameter Poisson--Dirichlet case, respectively. See also \cite{RW09a,RW09b,MRW11,MR15,PRS16} for different dependent models connected with diffusions processes. }

\section{Preliminaries on normalised generalised gamma priors}

Generalised gamma processes, introduced by \cite{B99}, are completely random measures with generalised gamma mean intensity, that is Levy processes with positive jumps and Levy measure on $[0,\infty)$ given by
$$\lambda(\d t) = \frac{e^{-\tau t} t^{-(1+\alpha)}}{\Gamma(1-\alpha)}\d t,$$
with $\alpha \in (0, 1)$ and $\tau \geq 0$. 
\cite{LMP07} exploited this construction for proposing a prior distribution for Bayesian nonparametric mixture modelling. This is obtained by normalising the jumps of a generalised gamma process via
$$p_i = \frac{J_i}{\sum_{k=1}^\infty J_k},$$
where $J_{i}$ are the jump sizes and $\sum_{k=1}^\infty J_k<\infty$ almost surely. The resulting random weights allow to define a discrete random probability measure by writing 
\begin{equation}\label{series repr}\nonumber
P=\sum\nolimits_{i\ge1}p_{i}\delta_{Z_{i}},
\end{equation} 
where $Z_{i}\overset{iid}{\sim}P_{0}$ and $P_{0}$ is a nonatomic probability measure on a  Polish space $\X$. The resulting normalised generalised gamma random probability measure induces a prior distribution on the space of discrete laws on $\X$, denoted here $\text{GG}(\beta,\alpha)$ for $\beta=\tau^{\alpha}/\alpha$. This can then be used at the top level of the hierarchy for Bayesian nonparametric modelling, replacing $\mathscr{P}$ with $\text{GG}(\beta,\alpha)$  in \eqref{hierachical model}. 

Denote by $K_n$ be the number of distinct values $X_{1}^{*},\ldots,X_{K_{n}}^{*}$ observed in a sample $X_{1},\ldots,X_{n}$ with $X_{i}\mid P\overset{iid}{\sim}P$. When $P\sim\text{GG}(\beta,\alpha)$, \cite{LMP07} showed that 
\begin{equation}\label{K distribution}
\P(K_{n}=k)=\frac{e^{\beta}\mathcal{G}(n,k,\alpha)}{\alpha\Gamma(n)}
\sum_{i=0}^{n-1}\binom{n-1}{i}(-1)^{i}\beta^{i/\alpha}\Gamma(k-i/\alpha;\beta),
\end{equation} 
where $\Gamma(a;x)$ is the incomplete gamma function, $\mathcal{G}(n,k,\alpha)$ is the generalised factorial coefficient (see \citealp{C05})
\begin{equation*}
\mathcal{G}(n,k,\alpha)=\frac{1}{k!}\sum_{j=0}^{k}(-1)^{j}\binom{k}{j}(-j\alpha)_{n}
\end{equation*} 
and $(a)_{n}=a(a+1)\cdots(a+n-1)$ is the increasing factorial. Furthermore, $K_{n}$ grows as $n^{\alpha}$ and
\begin{equation}\label{alpha-diversity}
\lim_{n \to \infty} \frac{K_n}{n^\alpha} = S \qquad\mbox{a.s.},
\end{equation} 
where $S$ is a random variable on $(0,\infty)$ with density
\begin{equation*}
g_{\beta,\alpha}(s)=\exp\{\beta-(\beta/s)^{1/\alpha}\}\alpha^{-1}s^{-1-1/\alpha}f_{\alpha}(s^{-1/\alpha}),
\end{equation*} 
and where $f_{\alpha}$ is the density of a positive stable random variable of index $\alpha$. Since $S$ summarises the asymptotic diversity in terms of number of groups which grows as a power function of $\alpha$, the partition associated with generalised gamma priors is said to have $\alpha$-diversity $S$. Cf.~\cite{P06}, Definition 3.10. 

Normalised generalised gamma priors belong to the larger class of Gibbs-type priors \citep{GP05,TPAMI}. These can be characterised, among other ways, in terms of the marginal law of the observations, which is given by a generalised P\'olya urn scheme. More specifically, conditionally on $K_n = k_n$, the predictive distribution for the observations associated with Gibbs-type models is given by the following generalised P\'olya urn scheme:
\begin{equation}\label{gen PU}
X_{n+1} | X_1, ..., X_n \sim g_0 (n, k_n)  P_{0}(\cdot) + g_1 (n, k_n) \sum_{j=1}^{k_n} (n_j - \alpha) \delta _{X_j^*}(\cdot).
\end{equation}
Here $P_{0}$ is as above, and the weights $g_{0}(n,k_{n}), g_{1}(n,k_{n})$, possibly dependent on other fixed parameters that characterise the specific model, satisfy
\begin{equation*}
g_0 (n, k_n)+(n-\alpha k_n)g_1 (n, k_n)=1
\end{equation*} 
for all $n\ge1$ and $1\le k_n\le n$. 
The interpretation of \eqref{gen PU} is that $g_0 (n, k_n)$ is the probability of sampling a previously unobserved value, 
{and the $g_1(n,k_n)(n_j-\alpha)$ is the probability of duplicating the distinct value $X_j^*$, thus enlarging the associated group by one unit.}
 \cite{LMP07} showed that in the normalised generalised gamma case we have
\begin{equation}\label{g0g1-GG}
\begin{aligned} 
g_0 (n, k_n) &= \frac{\alpha}{n} \frac{\sum_{i=0}^{n} \binom{n}{i} (-1)^i \beta^{i/\alpha} \Gamma(k_n + 1 - i/\alpha, \beta)}{\sum_{i=0}^{n-1} \binom{n-1}{i} (-1)^i \beta^{i/\alpha} \Gamma(k_n - i/\alpha, \beta)}, \\[3mm]
g_1 (n, k_n) &= \frac{1}{n} \frac{\sum_{i=0}^{n} \binom{n}{i} (-1)^i \beta^{i/\alpha} \Gamma(k_n - i/\alpha, \beta)}{\sum_{i=0}^{n-1} \binom{n-1}{i} (-1)^i \beta^{i/\alpha} \Gamma(k_n - i/\alpha, \beta)}.
\end{aligned}
\end{equation}
for $\alpha \in (0, 1)$ and $\beta$ as above.
Furthermore, the generalised gamma model is the only normalized completely random measure that is also of Gibbs type. See Proposition 2 in \cite{LPW08}.
{The Pitman--Yor process is also a member of the Gibbs family, in which case these quantities simplify to}
\begin{equation}\label{g0g1-PY}
g_0 (n, k_n) =\frac{\theta+\alpha k_n}{\theta+n},\quad \quad 
g_1 (n, k_n) = \frac{1}{\theta+n}
\end{equation}
for either $\alpha<0$ and $\theta=|\alpha|m$, $m\in \N$, or
\begin{equation}\label{parameters}
\alpha\in[0,1), \quad \quad \theta>-\alpha.
\end{equation} 

In this paper we aim at studying a dynamic version of the $\alpha$-diversity asymptotic result \eqref{alpha-diversity} for the generalised gamma model, after appropriately extending the distribution \eqref{K distribution} of the number of groups to a temporal framework, through the definition of a population model based on \eqref{gen PU}-\eqref{g0g1-GG}. To this end, we will make use of a recent result by \cite{AFNT15}, who extend a result contained in \cite{RWF13}. In particular, they derive the second order approximation of \eqref{g0g1-GG} to be
\begin{equation}\label{arbelapprox}
\begin{split}
g_0 (n, k_n) =&\, \frac{\alpha k_n}{n} + \frac{\beta}{k_n^{1/\alpha}} + o(n^{-1}), \\ 
g_1 (n, k_n) =&\, \frac 1n - \frac{\beta}{n k_n^{1/\alpha}} + o(n^{-2}),
\end{split}
\end{equation}
for $n\rightarrow \infty$, which allows to avoid, in view of an asymptotic study, a cumbersome computation with alternating sums and incomplete Gamma functions.

\section{A generalised gamma population model and its group dynamics}\label{sec:model}

\cite{RW09b} proposed a discrete construction for a class of two-parameter Poisson--Dirichlet diffusion models, introduced in \cite{P09}, based on the generalised P\'olya urn scheme \eqref{g0g1-PY}. Here we extend such approach for defining a stationary generalised gamma population model and derive the scaling limit for the dynamics of the number of groups (or species) in the population.  

Fix $n$, and let $X^{(n)}= (X_1, ... , X_n)$ be a sample from a generalised gamma model, with $X_1 \sim P_{0}$ and $X_{i} | X_1, ..., X_{i-1}$ as in (\ref{gen PU}) for $i=2,\ldots,n$. We update $X^{(n)}$ at discrete times by substituting a uniformly chosen coordinate of the vector with a replacement from its conditional distribution given the remaining observations. Given the exchangeability of the sample, and assuming we replace $X_{i}$, the new element has distribution
\begin{equation}\label{full conditional}
X_{i}' | X^{(n)}_{(-i)} \sim g_0 (n-1, k_{n,i})  P_{0}(\cdot) + g_1 (n-1, k_{n,i}) \sum_{j=1}^{k_{n,i}} (n_{j,i} - \alpha) \delta _{X_j^*}(\cdot)
\end{equation} 
where $X^{(n)}_{(-i)}=(X_1, ..., X_{i-1},X_{i+1},\ldots,X_{n})$ is the remaining sample after removing $X_{i}$, $k_{n,i}$ is the number of distinct values in $X^{(n)}_{(-i)}$ and $n_{j,i}$ is the cardinality of the $j$th cluster after removing $X_{i}$.
Thus $X_{i}'$ is of a new type with probability $g_0 (n-1, k_{n,i})$ or a copy of an existing type with probability $(n-1 - \alpha k_{n,i}) g_1 (n-1, k_{n,i})$.  In terms of the population model, copying an existing type is interpreted as a birth, whereby the offspring takes the parent type in a haploid population. New types are interpreted as births with mutation, where the mutant type does not depend on the parent type and is drawn from a pool of {infinitely many alleles}. {Removals are interpreted as deaths, which here keep the population size constant.} The resulting dynamics are those of a Moran model, which, together with Wright--Fisher models, are among the oldest approaches to mathematical population genetics. See \cite{E09} for background, and \cite{F10} for Moran and Wright--Fisher models applied to infinitely-many-alleles dynamics, with some connections to Bayesian nonparametrics. See also \cite{2pWF} for a recent Wright--Fisher construction of the two-parameter Poisson-Dirichlet diffusion. 

Denote the Markov chain resulting from the above described replacements by $X^{(n)}(\cdot)=\{X^{(n)} (m), m \in \N\}$, and define $K_{n}(\cdot)=\{ K_n (m) , m \in \N\}$ to be the process that tracks the number of distinct types in $X^{(n)}$.
Note that the dynamics of the Moran chain $X^{(n)}$ are equivalent to running a random scan Gibbs sampler \citep{SR93} on the joint distribution of a generalised gamma sample of size $n$. This implies the following. 

\begin{proposition}
Let $X^{(n)}(\cdot)=\{X^{(n)} (m), m \in \N\}$ be the Markov chain described  above, with transitions determined by replacing a randomly chosen coordinate $X_{i}$ with a sample from \eqref{full conditional}. Then $X^{(n)}(\cdot)$ is stationary. 
\end{proposition}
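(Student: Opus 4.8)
The plan is to show that the joint law of a generalised gamma sample of size $n$ is the unique stationary distribution for the random-scan dynamics described above, which immediately yields stationarity of $X^{(n)}(\cdot)$ when it is initialised from that law. First I would fix notation: let $\pi_n$ denote the law on $\X^n$ of $(X_1,\dots,X_n)$ obtained by the sequential sampling scheme \eqref{gen PU}, i.e. the exchangeable distribution associated with the $\text{GG}(\beta,\alpha)$ prior. Because the prior is exchangeable, $\pi_n$ is invariant under permutations of coordinates; this is the structural fact that makes the argument work.

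The core step is to verify that each single-coordinate update leaves $\pi_n$ invariant. Fix $i$ and consider the kernel $Q_i$ that replaces $X_i$ by a draw from its full conditional \eqref{full conditional} given $X^{(n)}_{(-i)}$. By construction this is exactly the conditional law $\pi_n(\,\cdot\mid X^{(n)}_{(-i)})$ of the $i$-th coordinate under $\pi_n$: indeed, removing $X_i$ from an exchangeable sample of size $n$ leaves an exchangeable sample of size $n-1$ whose predictive for a new ($(n-1{+}1)$-th) observation is \eqref{gen PU} with $n$ replaced by $n-1$ and $k_n$ by $k_{n,i}$, which is precisely \eqref{full conditional}. Hence $Q_i$ is a Gibbs (conditional) kernel for $\pi_n$, and a standard computation shows $\pi_n Q_i = \pi_n$: integrating the update against $\pi_n$ reconstructs the same marginal on $X^{(n)}_{(-i)}$ (which is untouched) times the correct conditional on $X_i$. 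The random-scan transition is $P = \frac1n\sum_{i=1}^n Q_i$, so $\pi_n P = \frac1n\sum_i \pi_n Q_i = \pi_n$, i.e. $\pi_n$ is stationary for the Moran chain. Consequently, if $X^{(n)}(0)\sim\pi_n$ — which is the initialisation prescribed in the construction, where $X^{(n)}$ starts as a genuine generalised gamma sample — then $X^{(n)}(m)\sim\pi_n$ for all $m$, and the chain is stationary.

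The step I expect to require the most care is the bookkeeping in identifying \eqref{full conditional} with the genuine $\pi_n$-conditional of $X_i$ given the rest. One must check that the cluster counts enter correctly: after deleting $X_i$, the remaining configuration has $k_{n,i}$ distinct values with sizes $n_{j,i}$, and the probability that $X_i$ rejoins cluster $j$ under $\pi_n$ is proportional to $g_1(n-1,k_{n,i})(n_{j,i}-\alpha)$ while the probability of forming a new singleton is $g_0(n-1,k_{n,i})$; matching these against the exchangeable partition probability function of the $\text{GG}(\beta,\alpha)$ model (equivalently, against the consistency/addition rule built into \eqref{gen PU}) is the one genuinely model-specific verification, though it is precisely the content of the Gibbs-type predictive structure recalled around \eqref{gen PU}–\eqref{g0g1-GG}. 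Everything else is the generic fact that a random-scan Gibbs sampler is reversible with respect to (in particular, leaves invariant) its target, so no further delicate estimates are needed; irreducibility or ergodicity are not required for the statement, only invariance together with the prescribed initialisation.
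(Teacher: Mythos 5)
Your argument is correct and is exactly the second route the paper itself takes: it justifies stationarity by observing that \eqref{full conditional} are the full conditional distributions of the coordinates under the joint law of a generalised gamma sample, so the random-scan update is a Gibbs sampler kernel leaving that law invariant, and the chain is initialised from it. You merely spell out the details (exchangeability, the identification of \eqref{full conditional} with the $\pi_n$-conditional, and $\pi_n P=\frac1n\sum_i\pi_n Q_i=\pi_n$) that the paper compresses into one sentence; the gratuitous word ``unique'' in your opening is neither needed nor proved, but nothing in the argument relies on it.
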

\begin{proof}
It follows by adapting the proof of Proposition 4.1 in \cite{RW09b}, which does not depend on the specific form of the urn weights, or equivalently by the stationarity of the Markov chain generated by a Gibbs sampler on $X^{(n)}$, given that \eqref{full conditional} are the full conditional distributions of the coordinates. 
\end{proof}

The stationary distribution of $X^{(n)}(\cdot)$ is clearly the joint law of an $n$-sized sample from \eqref{full conditional}. 
The previous result suggests that the present construction can be naturally embedded in broader Monte Carlo strategies where the distinct values of such observations represent the latent clusters for the data points. 

The transition probabilities of $K_n (m)$, denoted 
 $$p_n(k, k') = \P (K_n (m+1) = k' | K_n (m) = k), $$
can be easily derived from the dynamics of $X^{(n)}$. Denote by $M_{1,n}$ the number of types appearing only once in $X^{(n)}$. Then, the probability of a transition $k\mapsto k+1$ is given by the probability $1-M_{1,n}/n$ of not removing a group of size 1, times the probability $g_0 (n-1, k)$ of sampling a new type as a replacement. Similarly, the probability of a transition $k\mapsto k-1$ is given by the probability $M_{1,n}/n$ of removing a singleton, times the probability $(n - 1 - \alpha (k-1))g_1 (n-1, k-1)$ of duplicating an existing type as a replacement. Such transitions are not Markov, since $M_{1,n}$ carries more information than $K_{n}$. Following a similar approach to that in \cite{R14}, we can exploit the approximation $M_{1, n} \approx \alpha K_n$, deduced from Corollary 1 in \cite{LMP07}, to define
\begin{equation}\label{trprob0}
 p_n(k, k') = \left\{\begin{array}{ll}
\displaystyle  \left(1 - \frac{\alpha k}{n}\right)g_0 (n-1, k), &k' = k+1,\\[2mm]
\displaystyle 
\frac{\alpha k}{n} (n - 1 - \alpha (k-1))g_1 (n-1, k-1),\quad &k' = k-1,\\[2mm]
\displaystyle 1 - p(k, k+1) - p(k, k-1),  &k' = k,\\[1mm]
0,\quad \text{else};
\end{array}\right.
\end{equation}
(note that there is a misprint in {eq.~8 of} \cite{R14}, which should be as in \eqref{trprob0} with $g_{0},g_{1}$ as in \eqref{g0g1-PY}; i.e.,~dropping the small $o$ terms, with obvious modifications to the subsequent proof).
Using now \eqref{arbelapprox}, we can approximate \eqref{trprob0} with
 \begin{equation}{\label{trprob}}
 p_n(k, k') = \left\{\begin{array}{ll}
\displaystyle  \left(1 - \frac{\alpha k}{n}\right) \left(\frac{\alpha k}{n - 1} + \frac{\beta}{k^{1/\alpha}}+o(n^{-1})\right), \hspace{10mm}k' = k+1,\\[2mm]
\displaystyle 
\frac{\alpha k}{n}  (n -1 - \alpha (k-1)) \bigg(\frac{1}{n-1} - \frac{\beta}{(n-1) (k-1)^{1/\alpha}}+o(n^{-2})\bigg),\\[4mm]
\hspace{70mm}k' = k-1,\\[2mm]
\displaystyle 1 - p(k, k+1) - p(k, k-1),   \hspace{26mm}k' = k,
\end{array}\right.
\end{equation}
and 0 otherwise. Due to the approximation of $g_0 (n, k_n)$ and $g_1(n, k_n)$, non admissible values can arise for certain choices of parameters when $k_{n}$ is close to the boundary; hence the probabilities of $K_{n}$ stepping up or down are intended as $\min{(p_n(k, k+1), 1)}$ and $\max{(p_n(k, k-1), 0)}$ respectively.
Completed by the boundary conditions $p_n(1,0)=p_n(n,n+1)=0$, $K_{n}(m)$ with transitions \eqref{trprob} is clearly recurrent on $\{1,\ldots,n\}$. 

Define now $S(\cdot)=\{S (t) , t \geq 0 \}$ as the solution of the stochastic differential equation 
\begin{equation}\label{sde}
\d S (t) = \frac{\beta}{S(t)^{1/\alpha}} \d t + \sqrt{2 \alpha S(t)} \d B (t),
\end{equation}
where $B(t)$ is a standard Brownian motion. To the best of our knowledge, \eqref{sde} does not seem to belong to any well known class of diffusions. We will  first show that $S(t)$ above is a well defined diffusion process on $[0,\infty)$, it has an entrance boundary at $0$ and a natural boundary at $\infty$, and it is non stationary. An entrance boundary at the origin means that 0 can be the starting point of the process which instantly enters $(0,\infty)$ and never touches the origin again. A natural boundary at $\infty$ is instead attractive, but never reached. Then, we will show that $S(\cdot)$ is the scaling limit, as $n\rightarrow \infty$, of the above defined sequence of Markov chains after an appropriate space-time transformation.

\begin{proposition}{\label{boundaries}}
Let $S(\cdot)$ be the solution to \eqref{sde}. Then $S(\cdot)$ is a Feller process, has an entrance boundary at $0$ and a natural boundary at $\infty$, and it is non stationary.
\end{proposition}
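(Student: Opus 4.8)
The plan is to treat \eqref{sde} as a regular one-dimensional diffusion on $(0,\infty)$ with generator $\mathcal{A}f(s)=\alpha s\,f''(s)+\beta s^{-1/\alpha}f'(s)$ and to read the boundary behaviour off Feller's classification via the scale and speed densities. Fixing a reference point $x_0\in(0,\infty)$, the scale density is $\mathfrak{s}(x)=\exp\{-\int_{x_0}^{x}(\beta/\alpha)\,y^{-1/\alpha-1}\,\d y\}=c_0\,\edr^{\beta x^{-1/\alpha}}$, with $c_0=\edr^{-\beta x_0^{-1/\alpha}}>0$, and the speed density is $\mathfrak{m}(x)=(2\alpha x\,\mathfrak{s}(x))^{-1}=(2\alpha c_0)^{-1}\,x^{-1}\edr^{-\beta x^{-1/\alpha}}$. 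Since $1/\alpha>0$, $\mathfrak{s}$ explodes as $x\downarrow 0$ and tends to the positive constant $c_0$ as $x\uparrow\infty$, so $\int_0^{x_0}\mathfrak{s}=\int_{x_0}^{\infty}\mathfrak{s}=\infty$; hence neither endpoint is accessible from the interior, the unique strong solution started at any $x\in(0,\infty)$ — the coefficients being locally Lipschitz there — never reaches $0$ nor explodes to $+\infty$, and, in the natural scale $x\mapsto\int_{x_0}^{x}\mathfrak{s}(y)\,\d y$, whose range is all of $\R$, the process is a time change of one-dimensional Brownian motion and is therefore recurrent.

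To separate an entrance boundary from a natural one at an inaccessible endpoint $\ell$ it then suffices to decide whether the Feller integral with the scale integral inside and the speed density outside is finite (entrance) or infinite (natural). At $\ell=0$ this is $N_0:=\int_0^{x_0}(\int_x^{x_0}\mathfrak{s}(\eta)\,\d\eta)\,\mathfrak{m}(x)\,\d x$; an integration by parts in $\int_x^{x_0}\edr^{\beta\eta^{-1/\alpha}}\,\d\eta$ (a Laplace-type estimate) gives $\int_x^{x_0}\mathfrak{s}(\eta)\,\d\eta\sim(c_0\alpha/\beta)\,x^{1+1/\alpha}\edr^{\beta x^{-1/\alpha}}$ as $x\downarrow0$, so that, multiplied by $\mathfrak{m}(x)$, the two exponential factors cancel and the integrand of $N_0$ is asymptotic to $(2\beta)^{-1}x^{1/\alpha}$, which is integrable at $0$; thus $N_0<\infty$ and $0$ is an entrance boundary. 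At $\ell=\infty$ the analogous quantity is $N_\infty:=\int_{x_0}^{\infty}(\int_{x_0}^{x}\mathfrak{s}(\eta)\,\d\eta)\,\mathfrak{m}(x)\,\d x$, and here $\int_{x_0}^{x}\mathfrak{s}(\eta)\,\d\eta\sim c_0 x$ while $\mathfrak{m}(x)\sim(2\alpha c_0 x)^{-1}$, so the integrand tends to the constant $(2\alpha)^{-1}$ and $N_\infty=\infty$; hence $\infty$ is a natural boundary.

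Since no lateral boundary condition is required at either endpoint, standard results on one-dimensional diffusions (Feller's boundary classification) show that the martingale problem for $\mathcal{A}$ — equivalently the SDE \eqref{sde} — is well posed and generates a conservative Feller semigroup on $[0,\infty)$, with $0$ an admissible starting state that is unreachable from the interior yet entered instantaneously from itself; in particular $S(\cdot)$ is a Feller process. For non-stationarity, any stationary density $\rho$ must satisfy the forward equation $(\alpha x\,\rho)''-(\beta x^{-1/\alpha}\rho)'=0$, hence $(\alpha x\,\rho)'-\beta x^{-1/\alpha}\rho=c_1$ for some constant $c_1$; solving this linear first-order ODE gives $\rho(x)\propto x^{-1}\edr^{-\beta x^{-1/\alpha}}\,(c_2+(c_1/\alpha)\int_1^{x}\edr^{\beta\eta^{-1/\alpha}}\,\d\eta)$, and for every choice of $c_1,c_2$ this $\rho$ fails to be integrable near $+\infty$: when $c_1=0$ it decays only like $x^{-1}$, and when $c_1\neq0$ it converges to the nonzero constant $c_1/\alpha$. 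Therefore $S(\cdot)$ admits no invariant probability measure and is non-stationary.

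The one genuinely delicate step is the entrance-versus-natural dichotomy at the origin: it requires the asymptotics $\int_x^{x_0}\mathfrak{s}(\eta)\,\d\eta\sim(c_0\alpha/\beta)\,x^{1+1/\alpha}\edr^{\beta x^{-1/\alpha}}$ precisely enough that the exact cancellation of the two competing exponential factors in the integrand of $N_0$ is visible and the surviving power $x^{1/\alpha}$ is seen to be integrable. Everything else is either a bounded-below-integrand argument (at $\infty$, and for $\int_0^{x_0}\mathfrak{s}=\infty$) or a direct appeal to the standard one-dimensional diffusion machinery.
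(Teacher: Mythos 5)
Your proposal is correct and takes essentially the same route as the paper: Feller's boundary classification via the scale and speed densities (your $\mathfrak{s},\mathfrak{m}$ coincide with the paper's $z,m$ up to the constant $c_0$), with the entrance/natural integrals $N_0,N_\infty$ being exactly the paper's $N(0),N(+\infty)$ after Fubini, and the non-stationarity argument reducing, as in the paper, to the non-integrability of $m(x)\left[C_1Z(x)+C_2\right]$ for every choice of constants. The only difference is one of detail: you carry out explicitly the Laplace-type asymptotics and the first-order ODE solution where the paper simply cites Karlin--Taylor, Section 15.6, and Ethier--Kurtz, Corollary 8.1.1.
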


\begin{proof}
Classical Feller theory leads to studying the boundary behaviour of the process by investigating some functionals of the drift and diffusion coefficients that characterise the process. Here we highlight the relevant quantities and refer to \cite{KT81}, Section 15.6, for further details (see also \citealp{E09}, Section 3). Define the scale function 
\begin{equation*}
Z(x) = \int^x z(y) dy , \quad \quad z(x) =\exp\l\{- \int^x \frac{2 \mu(y)}{\sigma ^2(y)} dy\r\} 
\end{equation*} 
and the speed measure
\begin{equation*}
M(x) = \int^x m(y) dy, \quad \quad m(x) = \frac{1}{\sigma ^2(x) z(x)}.
\end{equation*} 
A standard calculation leads to find
\begin{equation}\label{speed measure}
z(x)=e^{\beta x^{-\frac 1\alpha}}, \quad \quad 
m(x)=\frac{1}{2\alpha x} e^{-\beta x^{-\frac 1\alpha}}.
\end{equation} 
Lettin $Z(a)=\lim_{x\rightarrow a}Z(x)$ and similarly for $M$, {for $\alpha, \beta$ as in \eqref{g0g1-GG}}
it is easy to see that 
$Z(0) = Z(+\infty) = M(+\infty) = \infty$ and $M(0) < \infty$. Moreover, from
\begin{equation*}
\Sigma(x) = \int^x\l(\int_t^x z(y) dy\r) m(t) dt, \quad \quad N(x) = \int^x\l(\int_t^x m(y) dy\r) z(t) dt
\end{equation*} 
we deduce $\Sigma(0) = \Sigma(+\infty) = N(+\infty) = \infty$ and $N(0) < \infty$. The second assertion now follows from \cite{KT81}, Section 15.6. 

Let now $\hat{C}([0, \infty))$ be the Banach space of continuous functions on $[0, \infty)$ vanishing at infinity.
Let also 
\begin{equation}\label{generatore limite}
A f(s)= (\beta/s^{1/\alpha}) f'(s)+  \alpha s f''(s)
\end{equation} 
be the infinitesimal operator corresponding to \eqref{sde} and define
\begin{equation*}
\mathcal{D} = \{ f \in C([0, \infty)) \cap C^2((0, \infty)) : Af \in C([0, \infty)) \}.
\end{equation*} 
Corollary 8.1.1 in \cite{EK86}, together with the second assertion, implies that $\{(f,Af):\ f\in \mathcal{D} \cap \hat{C}([0, \infty))\}$ generates a Feller semigroup on $\hat{C}([0, \infty))$, which is the first statement.

The proof is completed by the fact that a stationary distribution must take the form 
\begin{equation}\label{stationary}
\psi (x) = m(x) [C_1 Z(x) + C_2]
\end{equation} 
and the above arguments imply that both constants must vanish.  
\end{proof}

Given the boundary properties shown in Proposition \ref{boundaries}, it follows that, without loss of generality, we can start $S(\cdot)$ from $(0,\infty)$ and take the latter as the state space of the processes.

The following Theorem, which extends Proposition 3 in \cite{LMP07}, shows that \eqref{sde} is the scaling limit of the sequence of  Markov chains $\{K_{n}\}_{n\ge1}$ with transitions \eqref{trprob}, in the sense that, as $n\rightarrow \infty$, the sequence of appropriately transformed chains converges in distribution to $S(\cdot)$. 
To this end, denote by $Z_{n}\cdist Z$ convergence in distribution, let $D_{A}(B)$ be the Skorohod space of right-continuous functions from $A$ to $B$ with left limits, and $C_{A}(B)$ its subspace of continuous functions endowed with the topology of uniform convergence. Let also $\lfloor\cdot\rfloor$ be the floor function. 

\begin{theorem}\label{thm: principale}
Let $K_{n}(\cdot)=\{K_n (m) , m \in \N \}$ be the Markov chain on $\N$ with transition probabilities as in (\ref{trprob}), where $\alpha \in (0, 1)$ and $\beta > 0$, and define $\tilde K_{n}(\cdot)=\{\tilde K_n (t) , t \geq 0 \}$ by
$$\tilde K_n (t) = \frac{K_n (\lfloor n^{1 + \alpha} t \rfloor)}{n^\alpha}.$$
Let also $S(\cdot)$ be as in \eqref{sde}.
If $\tilde K_n (0) \cdist S (0)$, then 
$$\tilde K_{n}(\cdot)\cdist S(\cdot)\quad \quad \mbox{in  } C_{[0, \infty)} ([0, \infty))$$
as $n \to \infty$.
\end{theorem}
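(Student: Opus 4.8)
The plan is to deduce the weak convergence from the convergence of the infinitesimal operators, in the spirit of \cite{EK86}, Chapter~4, exploiting that the limiting operator $(A,\mathcal D)$ of \eqref{generatore limite} generates a Feller semigroup on $\hat C([0,\infty))$ by Proposition~\ref{boundaries}. Since $\tilde K_n(t)=K_n(\lfloor n^{1+\alpha}t\rfloor)/n^\alpha$ is a deterministic time change of $K_n(\cdot)$, performing one transition of $K_n$ per time increment $n^{-1-\alpha}$ on the lattice $n^{-\alpha}\N$, its discrete generator acts on a smooth function $f$ by
\begin{equation*}
A_nf(x)=n^{1+\alpha}\big[p_n(k,k+1)\big(f(x+n^{-\alpha})-f(x)\big)+p_n(k,k-1)\big(f(x-n^{-\alpha})-f(x)\big)\big],\qquad k=\lfloor n^\alpha x\rfloor,
\end{equation*}
with $p_n$ as in \eqref{trprob}. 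A second-order Taylor expansion, whose remainder is controlled via the uniform continuity of $f''$ on compacts of $(0,\infty)$ and the bound $p_n(k,k\pm1)=O(n^{\alpha-1})$ there, gives
\begin{equation*}
A_nf(x)=n\big[p_n(k,k+1)-p_n(k,k-1)\big]f'(x)+\tfrac12\,n^{1-\alpha}\big[p_n(k,k+1)+p_n(k,k-1)\big]f''(x)+o(1).
\end{equation*}

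Next I would identify the limiting coefficients. Inserting \eqref{trprob} and expanding by means of \eqref{arbelapprox}, with $k=\lfloor n^\alpha x\rfloor$ so that $k/n^\alpha\to x$ and $k^{1/\alpha}/n\to x^{1/\alpha}$, one gets $p_n(k,k\pm1)=\alpha k/n+O(n^{2\alpha-2})+O(n^{-1})$, hence $\tfrac12 n^{1-\alpha}[p_n(k,k+1)+p_n(k,k-1)]\to\alpha x$; and, crucially, in the difference $p_n(k,k+1)-p_n(k,k-1)$ both the $O(n^{\alpha-1})$ and the $O(n^{2\alpha-2})$ contributions cancel — the second cancellation being what keeps the limiting drift finite also when $\alpha\ge1/2$ — leaving $p_n(k,k+1)-p_n(k,k-1)=\beta k^{-1/\alpha}+o(n^{-1})$, so $n[p_n(k,k+1)-p_n(k,k-1)]\to\beta x^{-1/\alpha}$. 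Therefore $A_nf\to Af$ uniformly on each compact subinterval of $(0,\infty)$ for smooth $f$; this relies on the uniformity in $k$ (over ranges $k\asymp n^\alpha x$, $x$ in a compact) of the $o(n^{-1})$ and $o(n^{-2})$ remainders in \eqref{arbelapprox}, as supplied by \cite{AFNT15}.

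To pass from generator convergence to process convergence I would use a localization at the two boundaries, since \eqref{trprob}--\eqref{arbelapprox} and the convergence above are valid only away from the states $1$ and $n$. By Proposition~\ref{boundaries}, $S(\cdot)$ has an entrance boundary at $0$ and a natural boundary at $\infty$, hence, started from $S(0)\in(0,\infty)$, it stays in $(0,\infty)$ and, for every $T$, $\P(\inf_{t\le T}S(t)\le\delta)\to0$ as $\delta\downarrow0$ and $\P(\sup_{t\le T}S(t)\ge N)\to0$ as $N\uparrow\infty$. Fixing $0<\delta<N$, I would stop $\tilde K_n(\cdot)$ and $S(\cdot)$ at the first exit times $\tau^n_{\delta,N}$, $\tau_{\delta,N}$ from $[\delta,N]$; the stopped martingale problem for $A$ on $[\delta,N]$ (with, say, reflecting endpoints) is well posed, and \cite{EK86}, Chapter~4, Theorem~2.11 and Corollary~8.7, together with the hypothesis $\tilde K_n(0)\cdist S(0)$ and compact containment for the stopped chains, gives $\tilde K_n(\cdot\wedge\tau^n_{\delta,N})\cdist S(\cdot\wedge\tau_{\delta,N})$. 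Letting $n\to\infty$ and then $\delta\downarrow0$, $N\uparrow\infty$, and bounding the discarded part by the two estimates above, yields $\tilde K_n(\cdot)\cdist S(\cdot)$ in $D_{[0,\infty)}([0,\infty))$; since $S(\cdot)$ has continuous paths and $\tilde K_n$ has jumps of size $n^{-\alpha}\to0$, the convergence upgrades to $C_{[0,\infty)}([0,\infty))$.

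The main obstacle I anticipate is not the generator computation — essentially forced by \eqref{arbelapprox} once the cancellation above is observed — but the boundary analysis: proving, \emph{uniformly in $n$}, that $\tilde K_n$ started near $S(0)$ remains in a fixed compact of $(0,\infty)$ on $[0,T]$ with probability close to one, so that the localization can be removed. Near $0$ this is the discrete image of the entrance boundary and requires a Lyapunov-type estimate exploiting $p_n(k,k+1)\approx\beta k^{-1/\alpha}\gg p_n(k,k-1)\approx\alpha k/n$ for small $k$; near $\infty$ a supermartingale argument mirrors the natural boundary there. A secondary technical point is to extract from \cite{AFNT15} the precise uniformity of the remainder terms in \eqref{arbelapprox} needed above.
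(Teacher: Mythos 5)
Your generator computation coincides with the paper's and is correct: the paper also expands $(U_n-I)f(k_n/n^\alpha)$ to second order using \eqref{trprob} and \eqref{arbelapprox} and obtains $n^{1+\alpha}(U_n-I)f(s_n)\to \beta s^{-1/\alpha}f'(s)+\alpha s f''(s)=Af(s)$; your explicit remark that both the $O(n^{\alpha-1})$ and the $O(n^{2\alpha-2})$ contributions cancel in the drift difference is precisely the content of the paper's ``standard computation'', and your concern about uniformity of the remainders in \eqref{arbelapprox} is legitimate (the paper simply asserts uniform convergence on $(0,\infty)$). Where you genuinely diverge is in the passage from generator convergence to process convergence. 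The paper stays entirely within the Feller-semigroup framework: Proposition \ref{boundaries} shows that $(A,\mathcal D\cap\hat C)$ generates a Feller semigroup on $\hat C((0,\infty))$, so Theorem 1.6.5 of \cite{EK86} upgrades generator convergence to semigroup convergence $U_n(\lfloor t/\varepsilon_n\rfloor)f\to U(t)f$, and Theorem 4.2.6 of \cite{EK86} then gives convergence in $D_{[0,\infty)}((0,\infty))$ directly from semigroup convergence plus convergence of initial laws --- with no separate tightness or compact-containment argument, since that theorem works on the one-point compactification of the locally compact state space. The boundary is then dispatched by noting that $S$ started in $[0,\infty)$ never touches $0$ for $t>0$, and the upgrade from $D$ to $C$ is as in your last step.

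This matters because the one step you flag as an ``anticipated obstacle'' --- proving, uniformly in $n$, that $\tilde K_n$ stays in a fixed compact of $(0,\infty)$ on $[0,T]$ with high probability --- is not a technicality of your route but its main unproved ingredient: the stopped-martingale-problem/localization scheme you invoke only yields convergence of the stopped processes, and removing the localization requires exactly the uniform-in-$n$ exit-time estimates near $0$ and $\infty$ that you defer to an unsupplied Lyapunov/supermartingale argument. The corresponding estimate for the limit $S(\cdot)$ (entrance at $0$, natural at $\infty$) does not transfer to the prelimit chains without work. As written, therefore, your proof is incomplete at that point; the most economical fix is to replace the localization machinery by the paper's semigroup route (Corollary 8.1.1, Theorem 1.6.5 and Theorem 4.2.6 of \cite{EK86}), which absorbs the boundary behaviour into the Feller property already established in Proposition \ref{boundaries} and requires only the generator convergence you have already carried out.
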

\begin{proof}
Let $U_n$ be the semigroup operator induced by \eqref{trprob}. Writing $n$ and $k$ in place of $n-1$ and $k-1$ for brevity {given their asymptotic equivalence, we have}
\begin{align*}
U_n f(k) &= E[f(K_n(m + 1)) | K_n(m) = k ] \\
&= f(k+1) p_n(k, k+1) + f(k-1) p_n(k, k-1) + f(k) p_n(k, k) .
\end{align*}
Consider now the spatially rescaled variable and let $I$ denote the identity operator, leading to
\begin{align*}
(U_n - I) f\left(\frac{k_{n}}{n^\alpha}\right) = \ &E\l[f\l(\frac{K_n(m + 1)}{n^\alpha}\r) - f\l(\frac{K_{n}(m)}{n^\alpha}\r) \big| K_n(m) = k_{n} \r] \\
=&\left[f\left(\frac{k_{n}+1}{n^\alpha}\right) - f\left(\frac{k_{n}}{n^\alpha}\right)\right]\\
  &\times\left[\left(1 - \frac{\alpha k_{n}}{n}\right) \left(\frac{\alpha k_{n}}{n} + \frac{\beta}{k_{n}^{1/\alpha}}+o(n^{-1})\right)\right]  \\ 
 &+\left[f\left(\frac{k_{n}-1}{n^\alpha}\right) - f\left(\frac{k_{n}}{n^\alpha}\right)\right]\\
 &\times\left[ \frac{\alpha k_{n}}{n}  (n - \alpha k_{n}) \left(\frac 1n - \frac{\beta}{n k_{n}^{1/\alpha}}+o(n^{-2})\right)\right].
\end{align*}
A second order Taylor expansion, together with some standard computation, yields
$$(U_n - I) f(s_{n}) = n^{-1 - \alpha}\frac{\beta}{s_{n}^{1/\alpha}}f'(s_{n})  +  n^{-1 - \alpha} \alpha s_{n} f''(s_{n})+o(n^{-1-\alpha})$$
where $s_{n}=k_{n}/n^\alpha$.  Since $s_{n}\rightarrow s$ from \eqref{alpha-diversity}, it follows that 
$$n^{1 + \alpha} (U_n - I) f(s) \rightarrow  A f(s)$$
uniformly on $(0,\infty)$, for $f \in \mathcal{D}$, with $A$ as in \eqref{generatore limite}.
Theorem 1.6.5 in \cite{EK86} now implies that  
$$U_n\l(t/\varepsilon_{n} \r) f(s) \to U(t) f(s), \qquad \mbox{as } n \to \infty, \qquad\forall f(s) \in \hat{C}((0, \infty)),$$
where $\varepsilon_{n} = n^{-1-\alpha}$, where $U$ is the Feller semigroup operator corresponding to $A$. Then Theorem 4.2.6 of \cite{EK86} in turn implies that
$$\frac{K_n (\lfloor n^{1 + \alpha} t \rfloor)}{n^\alpha} \cdist S(t)$$
holds in $D_{[0, \infty)}((0, \infty))$, provided the weak convergence of the initial distributions holds on $(0,\infty)$. Since the $S(t)$ has null probability of touching the origin for all $t>0$, if the convergence of the initial distributions holds on $[0,\infty)$, then the weak convergence holds  in $D_{[0, \infty)}([0, \infty))$. 
 The full statement now follows from the fact that convergence in distribution on $D_{[0, \infty)}([0, \infty))$ to an object that belongs to $C_{[0, \infty)}([0, \infty))$ with  probability one, implies convergence in distribution on $C_{[0, \infty)}([0, \infty))$.
\end{proof}

\begin{figure}[t!]
\begin{center}
\includegraphics[width=.7\textwidth]{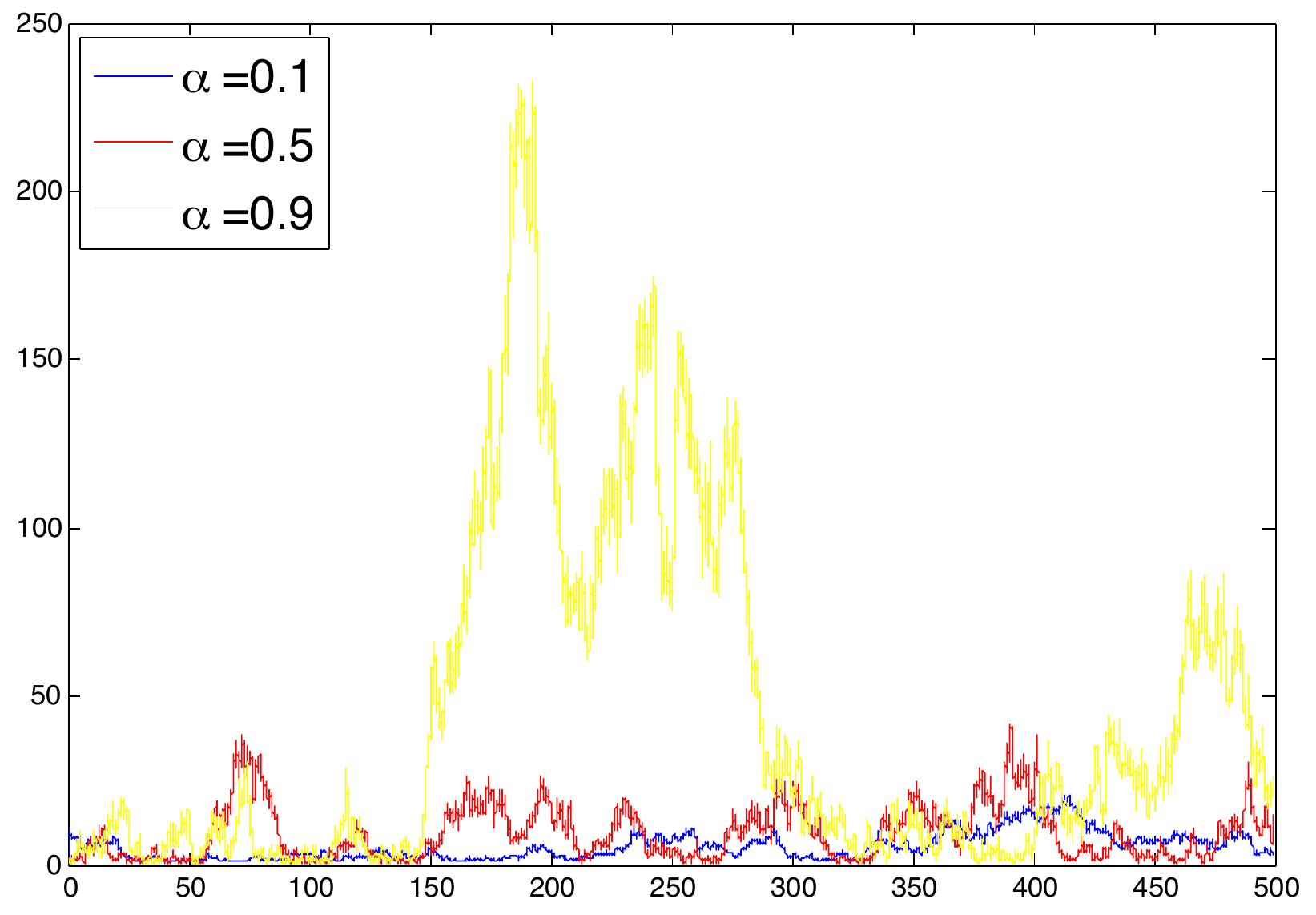}
\end{center}
\caption{
Some paths of \eqref{sde} for different values of $\alpha$.}
\label{fig: paths}
\end{figure}

The above Theorem states that the sequence of laws induced by the Markov chains $\tilde K_n (t)$ on the cadlag space of sample paths $D_{[0, \infty)}([0, \infty))$, converges weakly to the law induced by $S(\cdot)$ onto $C_{[0, \infty)}([0, \infty))$.
By analogy with \eqref{alpha-diversity}, the scaling limit $S(\cdot)$ in (\ref{sde}) can be interpreted as a dynamic measure of diversity in the generalised gamma population model constructed at the beginning of the present section. Figure \ref{fig: paths} shows some examples of sample paths of \eqref{sde} for different values of $\alpha$.


\section{Stationary approximations to the scaling limit}


Bayesian nonparametric inference in presence of temporally structured data usually tries to use stationary processes as building blocks of a broader model, as one typically has in mind a certain distributional structure for the marginal states and wants to make the latter depend on some covariate, such as time. Note that this approach is not particularly restrictive, as one can still model non stationary time series starting from stationary elements used for the construction in different hierarchical levels, in a similar spirit to hidden Markov models \citep{MR15}. It would then be desirable to have a stationary process describing the dynamics of the number of clusters. As this is not the case for the dynamics associated to generalised gamma clustering, as shown in Proposition \ref{boundaries}, we devise a weak approximation to the diffusion in Theorem \ref{thm: principale} such that any term of the approximating sequence is a stationary diffusion. This will provide stationary dynamics for the number of groups which are as close as desired to those induced by a generalised gamma population, with an explicit invariant measure. 

It is instructive to construct each term of the sequence of stationary diffusions from a continuous-time Markov chain, which highlights the underlying dynamics and allows a comparison with the results of the previous section. 
For any $\gamma > 0$, consider a continuous-time Markov chain $\{K_{n, \gamma} (t) , t \geq 0 \}$ on $\mathbb{N}$ with transition rates 
$$\lambda_1 = \frac{\alpha k^{1 + \gamma}}{n^{1+\alpha\gamma}} + \frac{\beta}{k^{1/\alpha}} + o (n^{-1})
=\frac{\alpha s^{1+\gamma}}{n^{1-\alpha}} + \frac{\beta/s^{1/\alpha}}{n} + o (n^{-1})
$$
from $k$ to $k+1$ and 
$$\lambda_2 =   \frac{\alpha k^{1 + \gamma}}{n^{1+\alpha\gamma}} + o (n^{-1})
=\frac{\alpha s^{1+\gamma}}{n^{1-\alpha}} + o (n^{-1})
$$
from $k$ to $k-1$. Here we are still assuming that $k_{n}/n^{\alpha}\rightarrow s_{n}$. 
The following result mimics Theorem \ref{thm: principale} and identifies the scaling limit of the sequence of Markov chains.

\begin{proposition}
Let $\{K_{n, \gamma} (t) , t \geq 0 \}$ be the above defined continuous time Markov chain with rates $\lambda_1$ and $\lambda_2$, and let $\{ \tilde{K}_{n, \gamma} (t) , t \geq 0 \}$ be defined as
$$\tilde{K}_{n, \gamma} (t) = \frac{K_{n, \gamma} ( n^{1 + \alpha} t )}{n^\alpha} .$$
Let $\{S_\gamma (t) , t \geq 0 \}$ be the diffusion process on $[0, \infty)$ driven by the stochastic differential equation 
\begin{equation}\label{sde2}
dS_\gamma (t) = \frac{\beta}{S_\gamma(t)^{1/\alpha}} dt + \sqrt{2 \alpha (S_\gamma(t))^{1+\gamma}} dB (t) .
\end{equation}
If $\tilde{K}_{n, \gamma} (0) \cdist S_\gamma (0)$ then 
$$\tilde{K}_{n, \gamma}(\cdot) \cdist S_\gamma (\cdot) \qquad\mbox{in  } C_{[0, \infty)} ([0, \infty))$$
as $n \to \infty$.
\end{proposition}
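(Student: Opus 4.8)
The plan is to follow the proof of Theorem~\ref{thm: principale} almost verbatim, the two new features being that $K_{n,\gamma}(\cdot)$ is already a continuous-time chain (so there is no internal time-change of a discrete semigroup) and that the diffusion coefficient of the candidate limit \eqref{sde2} is $2\alpha s^{1+\gamma}$ rather than $2\alpha s$, which forces a fresh boundary analysis. First I would write the infinitesimal generator of the birth--death chain,
\begin{equation*}
\mathcal{A}_n f(k) = \lambda_1\bigl(f(k+1)-f(k)\bigr) + \lambda_2\bigl(f(k-1)-f(k)\bigr),
\end{equation*}
and note that the rescaled process $\tilde K_{n,\gamma}(\cdot)$ is again a continuous-time Markov process whose generator, evaluated at a test function $g$ of the rescaled state $s=k/n^{\alpha}$, equals $n^{1+\alpha}\bigl[\lambda_1\bigl(g(s+n^{-\alpha})-g(s)\bigr)+\lambda_2\bigl(g(s-n^{-\alpha})-g(s)\bigr)\bigr]$. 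A second-order Taylor expansion of $g(s\pm n^{-\alpha})$, together with the stated expansions of $\lambda_1,\lambda_2$, which give $\lambda_1-\lambda_2 = \beta s^{-1/\alpha}n^{-1}+o(n^{-1})$ and $\lambda_1+\lambda_2 = 2\alpha s^{1+\gamma}n^{-(1-\alpha)}+o(n^{-(1-\alpha)})$, then yields
\begin{equation*}
n^{1+\alpha}\bigl[\lambda_1\bigl(g(s+n^{-\alpha})-g(s)\bigr)+\lambda_2\bigl(g(s-n^{-\alpha})-g(s)\bigr)\bigr]\;\longrightarrow\; A_\gamma g(s) := \frac{\beta}{s^{1/\alpha}}g'(s) + \alpha s^{1+\gamma} g''(s),
\end{equation*}
uniformly on compact subsets of $(0,\infty)$ for $g$ in the analogue $\mathcal{D}_\gamma$ of the domain $\mathcal{D}$ of Proposition~\ref{boundaries}; as in Theorem~\ref{thm: principale}, $k_n/n^\alpha\to s$ is used to replace $s_n$ by $s$ in the limit.

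The second ingredient is that $A_\gamma$ generates a Feller semigroup on $\hat{C}([0,\infty))$. Here I would repeat the computation in the proof of Proposition~\ref{boundaries} with $\sigma^2(x)=2\alpha x^{1+\gamma}$: the scale and speed densities become
\begin{equation*}
z_\gamma(x)=\exp\Bigl\{\tfrac{\beta}{1+\alpha\gamma}\,x^{-1/\alpha-\gamma}\Bigr\},\qquad
m_\gamma(x)=\frac{1}{2\alpha x^{1+\gamma}}\exp\Bigl\{-\tfrac{\beta}{1+\alpha\gamma}\,x^{-1/\alpha-\gamma}\Bigr\},
\end{equation*}
which reduce to \eqref{speed measure} when $\gamma=0$, and from these one checks that $0$ is again an entrance boundary --- $z_\gamma$ blows up non-integrably at the origin while $m_\gamma$ stays integrable there --- and that $\infty$ is again inaccessible, so that $S_\gamma$ does not explode; the finer Feller type at $\infty$ may shift with $\gamma$ (the speed measure is now finite near $\infty$), but this does not affect the argument. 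Corollary~8.1.1 in \cite{EK86}, together with the boundary behaviour, then gives the Feller property. Incidentally, since $m_\gamma$ is integrable at both endpoints for every $\gamma>0$, the speed measure is finite and $\psi_\gamma\propto m_\gamma$ is an invariant probability density with power-law right tail $x^{-1-\gamma}$, which is what makes the family $\{S_\gamma\}$ stationary; this plays no role in the present statement.

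With these two ingredients the conclusion follows by the same citations as for Theorem~\ref{thm: principale}: convergence of the generators plus the Feller property of the limit gives convergence of the semigroups (Theorem~1.6.5 in \cite{EK86}), Theorem~4.2.6 in \cite{EK86} upgrades this to $\tilde K_{n,\gamma}(\cdot)\cdist S_\gamma(\cdot)$ in $D_{[0,\infty)}((0,\infty))$ whenever $\tilde K_{n,\gamma}(0)\cdist S_\gamma(0)$, and since $S_\gamma$ has continuous paths and never reaches $0$, the convergence is promoted first to $D_{[0,\infty)}([0,\infty))$ and then, the limit lying in $C_{[0,\infty)}([0,\infty))$ almost surely, to $C_{[0,\infty)}([0,\infty))$. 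One should also record at the outset --- as for the chains in \eqref{trprob} --- that $K_{n,\gamma}(\cdot)$ is non-explosive on the time scale considered (or is taken with an absorbing boundary at a large state), so that the rescaled sequence is tight.

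The step I expect to be the main obstacle is the uniform convergence of the rescaled generators: both the drift $\beta s^{-1/\alpha}$ near $0$ and the coefficient $\alpha s^{1+\gamma}$ near $\infty$ are unbounded --- the latter more violently than in Theorem~\ref{thm: principale} --- so the convergence cannot hold uniformly on all of $(0,\infty)$ and must be arranged on the core $\mathcal{D}_\gamma\cap\hat{C}([0,\infty))$ on which $A_\gamma g$ vanishes at the boundary, while simultaneously controlling the $o(\cdot)$ remainders in $\lambda_1,\lambda_2$ and the non-admissibility of the rates near the boundary already flagged after \eqref{trprob}. The secondary subtlety is that the boundary behaviour at $\infty$ genuinely departs from the $S(\cdot)$ case and has to be redone, although its only consequence needed here is the absence of explosion.
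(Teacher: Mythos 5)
Your proposal is correct and follows essentially the same route as the paper: Taylor-expand the (rescaled) birth--death generator using $\lambda_1-\lambda_2=\beta s^{-1/\alpha}n^{-1}+o(n^{-1})$ and $\lambda_1+\lambda_2=2\alpha s^{1+\gamma}n^{-(1-\alpha)}+o(n^{-(1-\alpha)})$ to obtain $A_\gamma$, verify well-posedness of \eqref{sde2} by the scale/speed computation, and invoke the same results of \cite{EK86} as in Theorem~\ref{thm: principale}. The extra material you supply (the explicit $z_\gamma$, $m_\gamma$ and the boundary classification) is exactly what the paper carries out in Proposition~\ref{gammaboundaries}, so nothing is genuinely different.
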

\begin{proof}
The proof proceeds along the same lines of that of Theorem \ref{thm: principale}. In particular the well definedness of the diffusion follows by the same argument for
\begin{equation*}
A_{\gamma}f(s)=\frac{\beta}{s^{1/\alpha}}f'(s) +  \alpha s^{1 + \gamma}f''(s).
\end{equation*} 
with $f\in \mathcal{D}_\gamma([0, \infty))$ and 
$$\mathcal{D}_\gamma([0, \infty)) =  \{ f \in C([0, \infty)) \cap C^2((0, \infty)) : A_\gamma f \in C([0, \infty)) \}.$$
Letting now $U_{n, \gamma}$ be the semigroup corresponding to the Markov chain $K_{n,\gamma}$, we have
\begin{align*}
(U_{n, \gamma} - I) f\left(\frac{k_{n}}{n^\alpha}\right) 
&= \frac{1}{n^\alpha} f'(s_{n}) (\lambda_1 - \lambda_2) + \frac{1}{2n^{2\alpha}} f''(s_{n}) (\lambda_1 + \lambda_2)+o(n^{-1-\alpha}) \\ 
&= n^{-1-\alpha}f'(s_{n}) \frac{\beta}{s_{n}^{1/\alpha}} +n^{-1-\alpha} f''(s_{n}) \alpha s_{n}^{1 + \gamma}+o(n^{-1-\alpha})  ,
\end{align*}
and the rest of the proof follows similarly.
\end{proof}

We conclude by showing that any process in the class $\{S_{\gamma}(\cdot)\}_{\gamma>0}$ is stationary, we identify the invariant measure and prove that for any sequence $\gamma_{\ell}\rightarrow 0$, the associated sequence of diffusions $\{S_{\gamma_{\ell}}(\cdot)\}_{\gamma_{\ell}}$ converges in distribution to $S(\cdot)$ in Theorem \ref{thm: principale}, as $\ell\rightarrow \infty$. For notational simplicity, we write $S_{\gamma}(\cdot)$ in place of $S_{\gamma_{\ell}}(\cdot)$. 

\begin{proposition}{\label{gammaboundaries}}
Let $S(\cdot)$ and $S_{\gamma}(\cdot) $ be as in \eqref{sde} and \eqref{sde2} respectively. 
For any $\gamma>0$, $S_\gamma (\cdot)$ is stationary with invariant measure 
\begin{equation}\label{stationary2}
\psi_\gamma (x) \propto  \frac{1}{2\alpha x^{1 + \gamma}} \exp\l\{- \frac{\beta}{1 + \alpha \gamma} x^{-\frac{1 + \alpha \gamma}{\alpha}}\r\},\quad \quad x>0,
\end{equation} 
with normalising constant $C = 2\beta (\frac{1 + \alpha \gamma}{\beta})^{\frac{1}{1+\alpha\gamma}}/\Gamma(\frac{\alpha\gamma}{1 + \alpha\gamma})$,
and has an entrance boundary at $0$ and a natural boundary at $\infty$. Moreover, as $\gamma \to 0$, $S_{\gamma}(\cdot)$ converges in distribution to $S(\cdot)$ on $C_{[0,\infty)}([0,\infty))$, provided the initial distributions converge.
\end{proposition}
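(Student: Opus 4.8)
The plan is to prove the three assertions in turn: the form of the invariant measure together with stationarity, the boundary classification, and the weak convergence $S_\gamma(\cdot)\cdist S(\cdot)$ as $\gamma\to0$. For the invariant measure I would repeat the Feller-theoretic computation of the proof of Proposition~\ref{boundaries}, now with drift $\mu(x)=\beta x^{-1/\alpha}$ and diffusion coefficient $\sigma^2(x)=2\alpha x^{1+\gamma}$. A direct integration of $-2\mu/\sigma^2$ gives the scale density $z(x)=\exp\{\tfrac{\beta}{1+\alpha\gamma}x^{-(1+\alpha\gamma)/\alpha}\}$ and hence the speed density $m(x)=(\sigma^2(x)z(x))^{-1}=(2\alpha x^{1+\gamma})^{-1}\exp\{-\tfrac{\beta}{1+\alpha\gamma}x^{-(1+\alpha\gamma)/\alpha}\}$, which is exactly $\psi_\gamma$ up to normalisation. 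One then checks that the speed measure is finite near the origin (the exponential factor suppresses the singularity) and, crucially, finite near infinity, since $m(x)\sim(2\alpha)^{-1}x^{-1-\gamma}$ with $\gamma>0$; the normalising constant follows by evaluating $\int_0^\infty m(x)\,dx$ through the substitution $u=x^{-(1+\alpha\gamma)/\alpha}$, which reduces the integral to $\Gamma(\alpha\gamma/(1+\alpha\gamma))$ times elementary factors and yields the stated $C$. Stationarity is then immediate: a stationary density must be of the form \eqref{stationary}, $\psi(x)=m(x)[C_1Z(x)+C_2]$, and — in contrast with Proposition~\ref{boundaries} — the finiteness of the speed measure now permits the nontrivial choice $C_1=0$, $C_2>0$, so $\psi_\gamma$ is the (unique) invariant probability measure and the process started from it is stationary.

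The boundary behaviour comes from the same functionals $Z,M,\Sigma,N$ used in the proof of Proposition~\ref{boundaries}. Near the origin $z$ blows up faster than any power and $m$ decays faster than any power, so exactly as before $Z(0)=\infty$, $M(0)<\infty$, and the iterated integrals give $\Sigma(0)=\infty$, $N(0)<\infty$, whence $0$ is an entrance boundary. At infinity $z\to1$ and $\int^\infty z=\infty$, so $\infty$ is inaccessible, and the iterated integrals, computed from $\int_t^\infty m\sim(2\alpha\gamma)^{-1}t^{-\gamma}$, classify $\infty$ as a natural boundary. Corollary~8.1.1 in \cite{EK86} then re-establishes, as in Proposition~\ref{boundaries}, that $S_\gamma(\cdot)$ is a well-defined Feller process with generator $A_\gamma$ on $\mathcal D_\gamma([0,\infty))$, which legitimises the previous step.

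For the convergence $S_\gamma(\cdot)\cdist S(\cdot)$ I would follow the semigroup route of Theorem~\ref{thm: principale}. The generator of $S_\gamma$ is $A_\gamma f(s)=\tfrac{\beta}{s^{1/\alpha}}f'(s)+\alpha s^{1+\gamma}f''(s)$, and since $s^{1+\gamma}\to s$ as $\gamma\to0$ one has $A_\gamma f\to Af$, with $A$ as in \eqref{generatore limite}, for $f$ in a common core. Invoking the Trotter--Kurtz theorem (Theorem~1.6.1 in \cite{EK86}), using that $A$ generates a Feller semigroup by Proposition~\ref{boundaries} and each $A_\gamma$ does as well, yields convergence of the semigroups; Theorem~4.2.6 in \cite{EK86} then upgrades this to weak convergence on $D_{[0,\infty)}([0,\infty))$ once the initial laws converge; and since $S(\cdot)$ has continuous paths almost surely, the convergence holds on $C_{[0,\infty)}([0,\infty))$ by the argument closing the proof of Theorem~\ref{thm: principale}.

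The main obstacle, as in the earlier proofs, is verifying that $A_\gamma f\to Af$ on a genuine core of $A$ with enough uniformity: the second-order coefficients are unbounded at $\infty$ and their difference $\alpha s(s^\gamma-1)$ tends to $0$ only pointwise, so one must select the approximating domain (for instance the elements of $\mathcal D$ that are constant near $\infty$ and whose second derivatives decay fast enough, and check it is a core for the one-dimensional diffusion $A$) so that $s(s^\gamma-1)f''(s)\to0$ uniformly in $s$, while also controlling the drift term near the origin. A more robust alternative is to bypass semigroup convergence: prove tightness of $\{S_\gamma(\cdot)\}$ in $C_{[0,\infty)}([0,\infty))$ directly from moment or Lyapunov bounds that are uniform for $\gamma$ in a neighbourhood of $0$ (the coefficients of $S_\gamma$ being dominated, on compact subsets of $(0,\infty)$, by those of $S$), and then identify every weak limit point as the unique solution of the martingale problem for $A$, which is well posed by Proposition~\ref{boundaries}.
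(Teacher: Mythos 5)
Your proposal follows essentially the same route as the paper: the scale/speed computation giving $\psi_\gamma \propto m_\gamma$, the Karlin--Taylor boundary classification via $Z_\gamma, M_\gamma, \Sigma_\gamma, N_\gamma$, and Trotter--Kurtz convergence of the generators $A_\gamma \to A$ on a core, upgraded to weak convergence on $D_{[0,\infty)}([0,\infty))$ and then to $C_{[0,\infty)}([0,\infty))$. The uniformity issue you flag for $\alpha(s^{1+\gamma}-s)f''(s)$ at infinity is a genuine point that the paper glosses over (it simply asserts uniform convergence on $\mathcal{D}_{0}=\mathcal{D}\cap C^{2}([0,\infty))$), so your proposed restriction of the approximating class, or the tightness/martingale-problem fallback, is a sensible refinement of the same argument rather than a different approach.
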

\begin{proof}
Denote by $\mu_\gamma(\cdot)$ and $\sigma_{\gamma}(\cdot)$ the drift and diffusion coefficients in \eqref{sde2}. Then
\begin{align*}
z_\gamma(x) &= \exp\l\{- \int^x \frac{2 \mu_\gamma(y)}{\sigma_\gamma ^2(y)} dy\r\} 
= \exp\l\{\frac{\beta}{1 + \alpha \gamma} x^{-\frac{1 + \alpha \gamma}{\alpha}}\r\} 
\end{align*}
and 
\begin{equation*}
m_\gamma(x) = \frac{1}{\sigma_\gamma ^2(x) z_\gamma(x)} = \frac{1}{2\alpha x^{1 + \gamma}} \exp\l\{- \frac{\beta}{1 + \alpha \gamma} x^{-\frac{1 + \alpha \gamma}{\alpha}}\r\} .
\end{equation*} 
The function $z_\gamma(x)$ behaves essentially as $z(x)=z_{0}(x)$, so $Z_\gamma(0) = Z_\gamma(+\infty) = \infty$. The function $m_\gamma(x)$, instead, behaves like $m(x)=m_{0}(x)$ in a neighbourhood of $x = 0$, but  goes to $0$ as $1/x^{1+\gamma}$ for $x \to \infty$, so $M_\gamma(0) < \infty$ and $M_\gamma(+\infty) < \infty$. We immediately have $\Sigma_\gamma(0) = \Sigma_\gamma(+\infty) = \infty$. Moreover $N_\gamma(0) < \infty$ and $N_\gamma(+ \infty) = \infty$.
The boundary classification then again follows from \cite{KT81}, Section 15.6. 
From \eqref{stationary} we now find that $Z_\gamma(x) \equiv \infty$  implies $C_1 = 0$, whence $\psi_\gamma (x) \propto m_{\gamma}(x)$. 

Note now that the infinitesimal generators of $S_{\gamma}(\cdot)$ and $S(\cdot)$ satisfy
\begin{align*}
\l| A_{\gamma} f(s) - A f(s) \r| &= \left| \frac{\beta}{s^{1/\alpha}} f'(s) + \alpha s^{1 + \gamma} f''(s) - \left( \frac{\beta}{s^{1/\alpha}} f'(s) + \alpha s f''(s) \right)\right| \\
&=  \alpha | (s^{1 + \gamma} - s) f''(s) | \rightarrow 0 
\end{align*}
uniformly on $[0,\infty)$, for $f \in \mathcal{D}_{0}=\mathcal{D} \cap C^2([0, \infty))$, as $\gamma \to 0$. Now, $\mathcal{D}_{0}$  can be easily shown to be a core for $A$ (cf.~\cite{EK86}, Section 1.3), i.e.~its closure is such that $\overline{\mathcal{D}_{0}}=\mathcal{D}$ and $\overline{A\vert_{\mathcal{D}_{0}}}=A$ (here $\mathcal{D}_{0}$ and $\mathcal{D}$ differ for functions with one or two infinite derivatives at 0).
Theorems 1.6.1 and 4.2.5 in \cite{EK86} then yield
$$ \lim_{\gamma \to 0} U_{\gamma}(t) f = U(t) f \qquad \forall f \in  \hat{C}([0, \infty))$$
and 
$$S_{\gamma}(\cdot) \cdist S(\cdot) \qquad \mbox{  as  }  \gamma \to 0$$
on $D_{[0, \infty)}[0, \infty)$, provided the initial distributions converge. The rest of the argument is now analogous to the proof of Theorem \ref{thm: principale}.
\end{proof}

\begin{figure}[t!]
\begin{center}
\includegraphics[width=.7\textwidth]{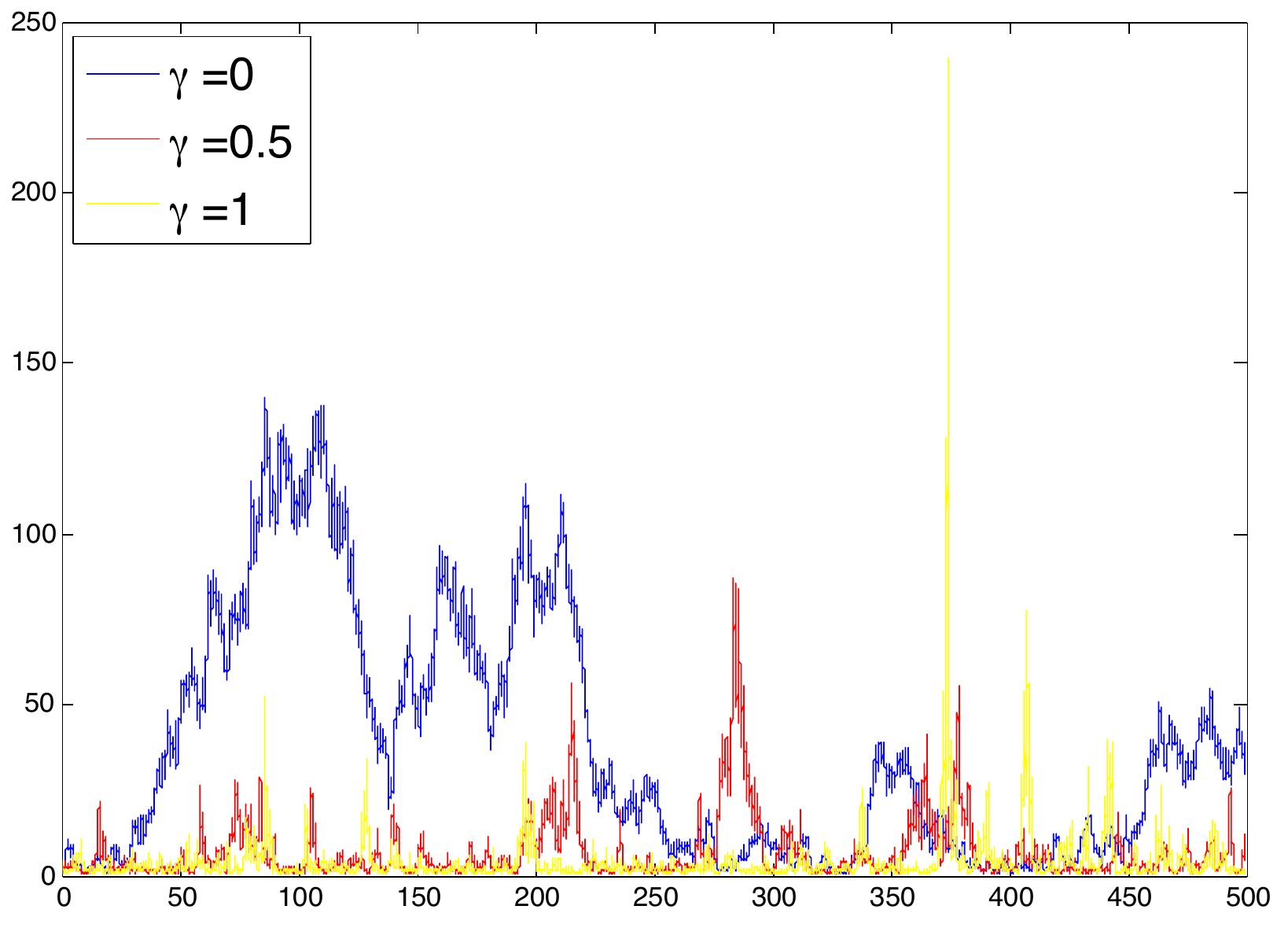}
\end{center}
\caption{Some paths of \eqref{sde2} for different values of $\gamma$.}\label{fig: paths2}
\end{figure}
\begin{figure}[t!]
\begin{center}
\includegraphics[width=.7\textwidth]{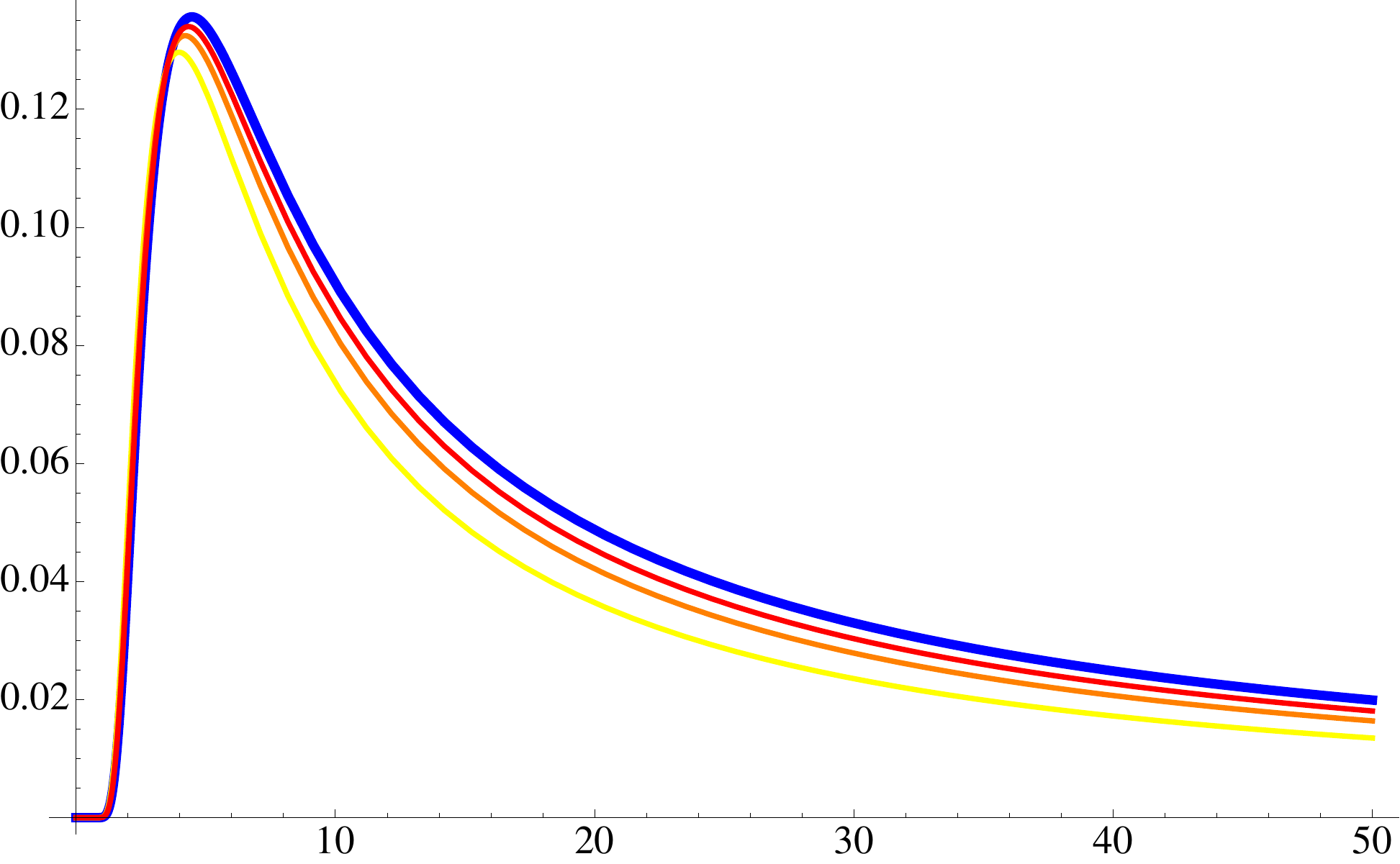}
\end{center}
\caption{Convergence of unnormalised stationary densities \eqref{stationary} for $\gamma=.1,.05,.025$ (yellow to red) to the speed measure \eqref{speed measure} of \eqref{sde} (blue), which does not integrate.}\label{fig: densities}
\end{figure}

Figure \ref{fig: paths2} shows the qualitative difference among sample paths of $S_{\gamma}(\cdot)$ for decreasing values of $\gamma$. Figure \ref{fig: densities} shows the convergence of the unnormalised stationary measures of $S_{\gamma}(\cdot)$  to the speed measure $m(x)$ of $S(\cdot)$ (blue curve), which does not integrate, for decreasing values of $\gamma$ (bottom to top); cf.~\eqref{speed measure} and \eqref{stationary2}. Here the stationary distribution of $S_{\gamma}(\cdot)$ has right tail decaying as $x^{-1-\gamma}$.


\section*{Acknowledgements}

The authors are grateful to two anonymous referees for helpful comments and to Pierpaolo De Blasi and Bertrand Lods for useful suggestions. The first author is supported by the European Research Council (ERC) through StG ``N- BNP'' 306406. This work was conducted while the second author was affiliated to the University of Torino and Collegio Carlo Alberto, Italy.



\end{document}